\newif\ifAMS
\AMStrue\usepackage{amssymb}}
\theoremstyle{plain}
\newtheorem*{inductive}{Theorem \ref{inductive}}
\newtheorem{Thm}{Theorem}[section]
\newtheorem{Lem}[Thm]{Lemma}
\theoremstyle{definition}
\newtheorem{Def}{Definition}
\theoremstyle{remark}
\newtheorem{Rem}{Remark}
\DeclareMathOperator{\area}{area}
\DeclareMathOperator{\vol}{vol}
\newcommand{\interior}{^{ \kern-5pt ^\circ}}
\newcommand {\bd}{\partial}
\newcommand {\diam}{\text{diam}}
\begin{document}
\title
{Uryson width and volume }

\author
{Panos Papasoglu }

\subjclass{53C23}

\keywords{volume, Uryson width, co-area inequality, Hausdorff content}

\email {papazoglou@maths.ox.ac.uk}

\address
{Mathematical Institute,  University of Oxford, Andrew Wiles Building, Woodstock Rd, Oxford OX2 6GG, U.K. }

\address
{ }

\begin{abstract} We give a short proof of a theorem of Guth relating volume of balls and Uryson width. The same
approach applies to Hausdorff content implying a recent result of Liokumovich-Lishak-Nabutovsky-Rotman.
We show also that for any $C>0$ there is a Riemannian metric $g$ on a  3-sphere such that $\vol (S^3,g)=1$
and for any map $f:S^3\to \mathbb R^2$ there is some $x\in \mathbb R^2$ for which $\diam (f^{-1}(x))>C$-answering a question of Guth.
\end{abstract}
\maketitle
\section{Introduction}

The Uryson width is a notion of topological dimension theory that was brought to the realm of Riemannian Geometry
by Gromov \cite{Gro-fill}, \cite{Gro-width},\cite{Gro-large}. It appears quite naturally in the context of thick-thin decompositions of Riemannian manifolds \cite{Gro-inter}.
Intuitively small $k$-Uryson width means that an $n$-dimensional space `collapses' to a $k$-dimensional space (where we assume $k<n$). For example if we consider a torus $T^2=S^1\times S^1$
where one of the $S^1$'s has very small length $\epsilon $ and the other has, say, length 1 then $T^2$ is `close' (collapses) to the circle of length 1-a lower dimensional manifold.
In case that our $n$-dimensional space is non compact, bounded $k$-Uryson width means that the space is `close' to a $k$-dimensional space.
 
We recall now the precise definition: if $X$ is a metric space we say that $X$ has $q$-Uryson width $\leq W$ if there exists a $q$-dimensional simplicial complex $Y$ and a continuous map
$\pi: X\to Y$ such that every fiber $\pi ^{-1}(y)$ has diameter $\leq W$. We write then that $UW_q(X)\leq W$.

Guth (\cite {Gu1},\cite{Gu2}) proved the following theorem answering a conjecture of Gromov:

\begin{Thm}\label{volume} There exists $\epsilon _n>0$ so that the following holds. If $(M^n,g)$ is a closed Riemannian manifold and there exists
a radius $R$ such that every ball of radius $R$ in $(M^n,g)$ has volume at most $\epsilon _n R^n$ then $UW_{n-1}(M^n,g)\leq R$.
\end{Thm}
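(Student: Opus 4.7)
The natural approach is to argue by induction on $n$, cutting $M$ along codimension-one hypersurfaces of small $(n-1)$-volume obtained from the co-area inequality, and applying the inductive statement to the resulting $(n-1)$-dimensional cut locus.

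First I would pick a maximal $R/2$-separated subset $\{x_1,\dots,x_N\}$ of $M$, so that the balls $B(x_i,R/2)$ are pairwise disjoint and the balls $B(x_i,R)$ cover $M$.  For each $i$, the co-area inequality applied to the distance function from $x_i$ gives
\[
\int_{R/2}^{R} \vol_{n-1}(\partial B(x_i,r))\,dr \;\le\; \vol(B(x_i,R)) \;\le\; \epsilon_n R^n,
\]
so I can choose $r_i\in(R/2,R)$, smoothable by a small perturbation, for which $S_i:=\partial B(x_i,r_i)$ is a hypersurface of $(n-1)$-volume at most $2\epsilon_n R^{n-1}$.

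Set $\Sigma=\bigcup_i S_i$.  Any $y\in M\setminus\Sigma$ lies in $B(x_i,R/2)$ for its nearest center $x_i$, and its component in $M\setminus\Sigma$ cannot cross $S_i$; so that component sits inside $B(x_i,r_i)$ and hence has diameter $\le 2R$ (rescaling the scale by a factor of $2$ throughout returns diameter $\le R$).  By induction I want $UW_{n-2}(\Sigma)\le R$ via a map $\pi\colon\Sigma\to Y^{n-2}$ with fibers of diameter $\le R$; then I would extend $\pi$ to a map $M\to Y'$, where $Y'$ is formed from $Y$ by attaching, for each component $P_j$ of $M\setminus\Sigma$, a cone on $\pi(\partial P_j)$ with new apex $v_j$, and sending $P_j$ to this cone via a collar of $\partial P_j$.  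The target is an $(n-1)$-complex, and every fiber is either a fiber of $\pi$ or lies in a single $P_j$, so has diameter $\le R$.

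The main obstacle is the inductive step on $\Sigma$.  The inductive hypothesis demands a local $(n-1)$-volume bound on $\Sigma$ at scale $R$, but $\Sigma$ is not a Riemannian manifold --- distinct $S_i$ meet in codimension-two singularities --- and bounding how many $S_i$ hit a given $R$-ball requires controlling the packing number of the centers $x_i$ in a comparable ball, which is not directly furnished by an upper volume hypothesis.  I therefore expect the proof is carried out not at the level of Riemannian volume but at the level of $(n-1)$-dimensional Hausdorff content, whose subadditivity makes the passage from the global bound $\sum_i \vol_{n-1}(S_i)\lesssim \epsilon_n R^{n-1}\cdot(\text{count})$ to a usable local bound on $\Sigma$ automatic.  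This is also, presumably, the reason the same scheme yields the Liokumovich--Lishak--Nabutovsky--Rotman theorem advertised in the abstract.
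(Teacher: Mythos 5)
Your skeleton (cut $M$ by a codimension-one set of small $(n-1)$-content produced via the co-area inequality, apply induction to the cut set, extend over the complementary pieces by coning) is the paper's skeleton, and your cone-extension step and the diameter estimate for the pieces are fine. But there is a genuine gap exactly at the point you flag, and your proposed fix does not close it. Taking $\Sigma=\bigcup_i S_i$ with $S_i=\partial B(x_i,r_i)$ gives only the bounds $\vol_{n-1}(S_i)\le 2\epsilon_n R^{n-1}$ one sphere at a time. To run the induction you need that \emph{every ball of the new, smaller radius} $R'$ meets $\Sigma$ in $(n-1)$-content at most $\epsilon_{n-1}(R')^{n-1}$. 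Subadditivity of Hausdorff content only gives $HC_{n-1}(\Sigma\cap B)\le\sum_i HC_{n-1}(S_i\cap B)$, and under a purely \emph{upper} volume hypothesis there is no bound on how many of the disjoint balls $B(x_i,R/4)$ — hence how many spheres $S_i$ — can crowd into a fixed small ball: a packing bound would require a \emph{lower} volume bound, which you do not have. So the sum can have arbitrarily many terms and no local estimate follows; moreover even a single $S_i$ could a priori concentrate all of its $(n-1)$-volume in one tiny ball. This is precisely why the theorem is not "automatic" from subadditivity.

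The paper's mechanism for the local bound is different and is the key idea you are missing: it does not take a union of distance spheres around a net. It considers the class of all $R/4$-\emph{separating} closed sets $Z$ (sets whose complement is a finite disjoint union of open pieces of diameter $\le R/4$), and picks $Z$ that is $\delta$-minimal for the ($\zeta$-restricted) $(n-1)$-Hausdorff content among all such sets. The local bound on $Z$ is then proved by an exchange argument (Lemma \ref{small-cut}): if some ball $B(x,R/1000)$ contained too much content of $Z$, the co-area inequality applied to the hypothesis $HC_n(B(x,R))\le\epsilon_nR^n$ produces a sphere $S(x,r)$, $r\in[R/100,R/50]$, of very small content; replacing $Z\cap B(x,r)$ by $S(x,r)$ yields another $R/4$-separating set of strictly smaller content, contradicting near-minimality. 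Thus the co-area inequality is used not to build the cutting set directly but to furnish the competitor in a minimization, which is what the paper means by the cutting set having a "rough minimal area property" in the spirit of Schoen--Yau. (A secondary point: to make this work for general compact metric spaces one must use the $\zeta$-restricted content $HC^{\zeta}_{n-1}$ to compensate for the failure of additivity, and the induction is phrased so that \emph{simplices}, not just points, of the target complex have small preimages.) Without the minimization step, your construction does not yield a set to which the inductive hypothesis applies.
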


Guth conjectured something stronger that applies to general metric spaces and uses Hausdorff content instead of volume. This was shown recently by Liokumovich-Lishak-Nabutovsky-Rotman \cite{LLNR}. 

The proofs of all these results are somewhat technical as they associate to the space some nice coverings and then approximate the space
by the rectangular nerve of these coverings (a method introduced by Gromov in \cite{Gro-coh} and applied in \cite{Gro-fill}, p.130 to the case
of manifolds with a lower Ricci curvature bound). They also use various generalizations of the isoperimetric inequality.

Our aim in this paper is to give a direct proof relying only on the co-area inequality.
Our method gives also a new weaker sufficient condition for a space to have small Uryson width. Guth in \cite{Gu2} discusses the relationship between classical topological dimension theory and the quantitative version of this theory for manifolds. 
In this spirit we show the following quantitative characterization of small Uryson width that resembles the inductive definition of classical topological dimension.
\begin{inductive}  There exist $\epsilon _n>0$ so that the following holds. Suppose $X$ is a proper metric space
and there exists
a radius $R$ such that every ball $B(x,R)$ in $X$ is contained in an open set $U$ with the properties:

1. $U\subseteq B(x,10R)$.

2. $HC_{n-1}(\bd U)\leq \epsilon _nR^{n-1}$.

Then $UW_{n-1}(X)\leq R$.

\end{inductive}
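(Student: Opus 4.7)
The plan is to argue by induction on $n$, exploiting the fact that the hypothesis mirrors the classical inductive definition of topological dimension: every ball is contained in an open set whose \emph{boundary} is ``$(n-1)$-small'' in the $HC_{n-1}$ sense. The base case $n=1$ is essentially formal. With $\epsilon_1 < 1$ the bound $HC_0(\partial U) \le \epsilon_1$ forces $\partial U = \emptyset$, so each $U_x$ is clopen. Consequently $X$ decomposes as a disjoint union of clopen pieces of bounded diameter, and combining this with the hypothesis applied to points inside each piece yields a refined clopen decomposition whose pieces have diameter $\le R$, i.e.\ $UW_0(X) \le R$.

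For the inductive step I would assume the theorem in all dimensions below $n$. Given $B(x,R)$, extract $U_x$ from the hypothesis. Monotonicity of Hausdorff content implies that every ball in $\partial U_x$ has $(n-1)$-content at most $\epsilon_n R^{n-1}$; by choosing $\epsilon_n$ small enough in terms of $\epsilon_{n-1}$, this is bounded by $\epsilon_{n-1} r^{n-1}$ for some $r$ comparable to $R$. Applying the Liokumovich--Lishak--Nabutovsky--Rotman theorem \cite{LLNR} (or, equivalently, the induction hypothesis) to the metric subspace $\partial U_x$ then produces a continuous projection $\psi_x:\partial U_x \to Z_x$ onto a complex of dimension at most $n-2$ with fibres of diameter at most $R/c$ for some controlled constant $c$.

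Next I would take a maximal $R/2$-separated net $\{x_i\}$ in $X$, so that $\{B(x_i,R)\}$ covers $X$, and set $V_i := U_{x_i} \setminus \bigcup_{j<i} U_{x_j}$; these $V_i$ partition $X$ and each lies in $B(x_i, 10R)$. I would assemble $Y$ out of the cones $CZ_i$ (each of dimension $n-1$) glued along a nerve-like combinatorial scheme reflecting the incidences among the $U_{x_i}$, and define $\pi: X \to Y$ on each $V_i$ by sending $x$ to $\psi_i$ of its nearest point on $\partial U_i$ when $x$ is close to $\partial U_i$, interpolating toward the cone vertex of $CZ_i$ as $d(x,\partial U_i)$ grows. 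This gives a continuous map into an $(n-1)$-complex whose fibres are contained in individual $U_{x_i}$'s.

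The main obstacle is two-fold. First, continuity of $\pi$ across the interfaces between the $V_i$ forces the maps $\psi_i$ and $\psi_j$ to agree up to the fibre tolerance on overlapping boundary pieces, which is exactly where the co-area inequality should enter: it lets one make compatible choices of $\psi_i$ by exploiting the global smallness of $\bigcup \partial U_i$ in $HC_{n-1}$. Second, the fibre diameter that falls out of this construction is of order $20R$ (since $U_{x_i} \subseteq B(x_i,10R)$), whereas the theorem demands $\le R$; I expect closing this scale gap to require an iterative refinement in which each $\partial U_i$ is replaced by a nearby level set of $d(\cdot, \partial U_i)$ of comparable $(n-1)$-content (guaranteed by the co-area inequality) together with a rescaling that is absorbed into the final choice of $\epsilon_n$.
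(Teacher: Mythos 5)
There is a genuine gap, and you have flagged it yourself: the compatibility of the local maps $\psi_i$ across the interfaces of the $V_i$ is not a technicality to be patched by the co-area inequality --- it is the entire difficulty, and your proposal contains no mechanism for resolving it. The $\psi_i:\partial U_{x_i}\to Z_i$ are produced independently by the inductive hypothesis, so there is no reason two of them should agree, even approximately, where their domains meet; ``nearest point on $\partial U_i$'' is neither well defined nor continuous in a general proper metric space; and a nerve-type gluing of the cones $CZ_i$ raises the dimension by the multiplicity of the cover $\{U_{x_i}\}$, which is not bounded by $n-1$ (nor bounded at all without a doubling hypothesis). The paper's proof never constructs local maps. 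It builds one global closed set: a $\delta$-minimal $D$-separating set $Z$ (with $D$ a fixed multiple of $R$), i.e.\ a closed set nearly minimizing $HC^{\zeta}_{n-1}$ among those whose complement is a disjoint union of open pieces of diameter $\le D$. The hypothesis enters only through an exchange argument: if $Z$ had large $(n-1)$-content in some small ball around $x$, replace $Z\cap U$ by $\partial U$, where $U\supseteq B(x,R)$ is the open set supplied by the hypothesis; the new set is still $D$-separating, and since $HC_{n-1}(\partial U)\le\epsilon_nR^{n-1}$ is much smaller than what was deleted, near-minimality is contradicted. (This is exactly the role the co-area inequality plays in Theorem \ref{content}, where the small-boundary set $\partial U$ has to be manufactured as a metric sphere $S(x,r)$; here it is handed to you.) Thus $Z$ has locally small $HC^{\zeta}_{n-1}$, the inductive hypothesis is applied \emph{once} to $Z$, giving $\pi_1:Z\to\Sigma_1$ with $\dim\Sigma_1\le n-2$ and small fibres, and each piece $U_i$ of $X\setminus Z$ is mapped into the cone over the subcomplex $\pi_1(\partial U_i)$ using the fact that a finite cone is an absolute retract, so $\pi_1|_{\partial U_i}$ extends continuously over $\overline{U_i}$. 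Continuity of the assembled map and the fibre bound (diameter of a piece plus diameter of a fibre of $\pi_1$) are then immediate. The missing idea in your write-up is precisely this minimality-plus-exchange device, which is what converts the purely local hypothesis into a single globally small separating set.

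Two smaller points. Your base case does not close: clopen pieces $U_x\subseteq B(x,10R)$ only give a decomposition into pieces of diameter up to $20R$, and ``applying the hypothesis inside each piece'' cannot refine a connected clopen piece of diameter, say, $5R$, so no decomposition with fibres $\le R$ results; the relevant base case in the paper's scheme is the covering argument of Lemma \ref{n=1}. And your proposed cure for the scale gap --- replacing $\partial U_i$ by a nearby level set of $d(\cdot,\partial U_i)$ of comparable $(n-1)$-content via co-area --- is unavailable, since the co-area inequality controls slices of a set in terms of its $n$-dimensional content, and you have no bound on the $n$-content of a neighbourhood of $\partial U_i$.
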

We denote above by $HC_n$ the $n$-dimensional Hausdorff content of a metric space-for the definition see section 2.

We note also that Nabutovsky \cite{Na} used the method of this paper to give better bounds for the dimensional constant in the celebrated systolic inequality of Gromov.

We give now the idea of the proof of theorem \ref{volume}: Let's say that we have a thickened plane $P$, so locally the volume growth is much smaller than $r^3$.
Then we cut $P$ in pieces of small diameter $<D$ by a `thickened grid' $G$ (we call this a $D$-{\it separating subset} in sec. 2). Using the co-area inequality (see lemma \ref{small-cut}) we show that there is a thickened grid that locally has volume growth much smaller than $r^2$ so by induction it admits a map $f$ to a 1-dimensional complex $\Sigma $ with small fibers. By adding finite cones to $\Sigma $ we may extend
$f$ to the pieces of $P\setminus G$, so to the whole of $P$. It is easy to see that the fibers of this map have small diameter.
We note that our approach is reminiscent of the minimal hypersurface method of Schoen-Yau \cite{SY1}, \cite{SY2} which was used also by Guth \cite{Gu3} in a context
similar to ours. Indeed our `thickened' grid has a rough `minimal area' property and in some cases can indeed be replaced by a smooth hypersurface. One novelty
of our approach is that even when we deal with manifolds we need to consider more general spaces like the grid in the above example.

In section 2 we carry out this proof in the more general context of compact metric spaces and in section 3 we deal with the non-compact case.

These results imply that there is some $C>0$ such that if $\vol (S^3,g)=1$ then there is a map $f:S^3\to \Sigma $ where $\Sigma $ is a 2-complex and the `fibers'
of the map have diameter bounded by $C$. Guth in \cite {Gu4} asks whether $\Sigma $ may be replaced by $\mathbb R^2$. In section 4 we give examples
showing that the answer is negative.

I am grateful to Stephane Sabourau for pointing out mistakes in an earlier version of this paper and making suggestions that improved the exposition, and
to Larry Guth for bringing to my attention the relationship of this approach to minimal surfaces. I thank the referees for their constructive comments that made
this paper more readable.

\section {Uryson width of compact metric spaces}   

We prove in this section the generalization of theorem \ref{volume} for metric spaces conjectured by Guth. There are some technicalities in the
proof as we work with Hausdorff content which is not a measure. We explain in the end how
can one give a simpler proof in the manifold case using Hausdorff measure (see remark \ref{measure}).

\begin{Def} The $n$-\textit{dimensional Hausdorff content} $HC_n(U)$ of a subset of a metric space $X$ is the
infimum of $\sum _{i=1}^{\infty }r_i^n$ over all coverings of $U$ by countably many balls $B(x_i,r_i)$.
\end{Def}
We will need a slight variation of Hausdorff content-this will allow us to sidestep the problem that Hausdorff content
is not a measure so it is not additive:

\begin{Def} The \textit{$\zeta $-restricted} $n$-\textit{dimensional Hausdorff content} $HC^{\zeta}_n(U)$ of a subset of a metric space $X$ is the
infimum of $\sum _{i=1}^{\infty }r_i^n$ over all coverings of $U$ by countably many balls $B(x_i,r_i)$ where $r_i\leq \zeta $ for all $i$.
\end{Def}

Clearly we have $HC^{\zeta}_n(U)\geq HC_n(U)$. We remark that if $U$ is contained in a ball of radius $\zeta $ then $HC^{\zeta}_n(U)=HC_n(U)$.

{\bf Notation}. We denote by $B(x,r)$ the open metric ball of radius $r$ and center $x$ and by $\bar B(x,r)$ 
the closed ball. When we don't care about the center we denote it by $B(r)$ ($\bar B(r)$ respectively). We denote
by $S(x,r)$ the sphere of radius $r$ and center $x$, and we denote this by $S_r$ when the center is obvious. Finally we denote by $B(r_2)\setminus B(r_1)$ the annulus between two concentric metric balls. 

The co-area formula \cite[ Theorem 13.4.2]{BZ} will be our main tool. As we will work in the context of metric spaces
it will be crucial below that there is a co-area inequality that applies to Hausdorff content as was shown recently in \cite{LLNR}.
We state this here for $\zeta $-restricted Hausdorff content.

\begin{Lem}\label{co-area} (Lemma 5.3 of \cite{LLNR}) Let $U\subset B(r_2)\setminus B(r_1)$ be a closed set of a proper metric space. Then
$$\int _{r_1}^{r_2}HC^{\zeta}_{n-1}(S_r\cap U)\,dr\leq 2HC^{\zeta}_n(U) $$
where the integral is the Upper Lebesgue integral. The same inequality applies to the Hausdorff content.
\end{Lem}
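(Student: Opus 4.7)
The plan is to mimic the classical proof of the co-area inequality for Hausdorff measure, adapted to the Hausdorff content setting. Fix $\varepsilon>0$ and choose a cover of $U$ by countably many balls $B(x_i,\rho_i)$ with $\rho_i\leq\zeta$ such that
$$\sum_{i=1}^{\infty}\rho_i^{\,n}\;\leq\;HC^{\zeta}_n(U)+\varepsilon.$$
The goal is to use this single cover of $U$ to manufacture, for each $r\in[r_1,r_2]$, a cover of the slice $S_r\cap U$ whose cost is only $O(1/\rho_i)$ times more expensive per ball after one integrates out in $r$.

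For each $i$, let $I_i=\{r\in[r_1,r_2]:B(x_i,\rho_i)\cap S_r\neq\emptyset\}$. Denoting by $o$ the center of the nested balls $B(r_1),B(r_2)$, the triangle inequality forces $I_i\subseteq[d(x_i,o)-\rho_i,d(x_i,o)+\rho_i]$, so the Lebesgue measure satisfies $|I_i|\leq 2\rho_i$. Moreover, whenever $r\in I_i$, the ball $B(x_i,\rho_i)$ (a legal ball in $X$ of radius $\leq\zeta$) covers the piece $B(x_i,\rho_i)\cap S_r\cap U$. Consequently the subcollection $\{B(x_i,\rho_i):r\in I_i\}$ covers $S_r\cap U$, which gives the pointwise bound
$$HC^{\zeta}_{n-1}(S_r\cap U)\;\leq\;\sum_{i=1}^{\infty}\rho_i^{\,n-1}\,\mathbf{1}_{I_i}(r).$$

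Now I would integrate in $r$. The right side is a sum of nonnegative measurable (indeed characteristic-function) terms, so by Tonelli/monotone convergence its upper Lebesgue integral equals $\sum_i \rho_i^{n-1}|I_i|\leq \sum_i 2\rho_i^{\,n}$. The upper Lebesgue integral is monotone and countably subadditive under domination by nonnegative sums, which is the one technical point to verify carefully (the function $r\mapsto HC^{\zeta}_{n-1}(S_r\cap U)$ is not a priori measurable, which is why the statement uses the upper integral). Combining these,
$$\overline{\int}_{r_1}^{r_2}HC^{\zeta}_{n-1}(S_r\cap U)\,dr\;\leq\;\sum_{i=1}^{\infty}\rho_i^{\,n-1}|I_i|\;\leq\;2\sum_{i=1}^{\infty}\rho_i^{\,n}\;\leq\;2HC^{\zeta}_n(U)+2\varepsilon.$$
Letting $\varepsilon\to 0$ gives the claim for the $\zeta$-restricted content, and the identical argument (dropping the constraint $\rho_i\leq\zeta$) yields it for $HC_n$.

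The step I expect to be the main subtlety is the countable subadditivity of the upper Lebesgue integral applied to the (possibly non-measurable) function $r\mapsto HC^{\zeta}_{n-1}(S_r\cap U)$; everything else is geometry that reduces to the elementary fact $|I_i|\leq 2\rho_i$. Closedness of $U$ is convenient for ensuring that the slices $S_r\cap U$ behave reasonably, but the whole argument is really about the cover, not about $U$ itself.
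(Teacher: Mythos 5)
Your proposal is correct and follows essentially the same route as the paper's (and LLNR's) argument: cover $U$ by balls nearly realizing $HC^{\zeta}_n(U)$, note that each ball of radius $\rho_i$ meets $S_r$ only for $r$ in a set of measure at most $2\rho_i$ and covers $S_r\cap B(x_i,\rho_i)$ at cost $\rho_i^{n-1}$, then integrate and sum. The only cosmetic difference is that the paper extracts a finite subcover (using compactness of the closed set $U\subset \bar B(r_2)$) before applying subadditivity of the upper integral, whereas you invoke countable subadditivity directly; both work.
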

\begin{proof} We outline the proof of this from \cite{LLNR} for the reader's convenience. If $B(R)$ is a ball 
and $S_r$ is a sphere then $S_r\cap B(R)$ is contained in a ball of radius $\leq R$ for any $r$,
so $HC_{n-1}(S_r\cap B(R))\leq R^{n-1}$ for any $r$. So if $B(R)$ is a ball contained in an annulus $B(r_2)\setminus B(r_1)$ and $\zeta \geq R$
we have $$\int _{r_1}^{r_2}HC_{n-1}^{\zeta}(S_r\cap B(R))\,dr\leq 2R\cdot R^{n-1} \  \ (*).$$
Note now that if $U$ is any closed set for any $\epsilon >0$ there is a covering of $U$ by finitely many balls $B_i(r_i)$, $i=1,...,k$ so that $r_i\leq \zeta $ and
$\sum _{i=1}^k r_i^n-HC^{\zeta}_n(U)<\epsilon $ and the result follows by $(*)$.
Clearly this proof applies to $HC_n(U)$ as well.
\end{proof}

Guth conjectured in \cite{Gu2} that if a compact (or even proper) metric space has locally small $n$-Hausdorff content then it has small Uryson width. We treat now the easier case $n=1$.

\begin{Lem}\label{n=1}  Let $X$ be a proper metric space and let $R>0$. If for any $x\in X$ the $1$-dimensional Hausdorff content of the ball $B(x,R)$ is bounded by
$\dfrac{1}{100} R$ then $UW_{0}(X)\leq R$.
\end{Lem}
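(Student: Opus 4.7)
Recall that $UW_0(X)\le R$ means there is a continuous map from $X$ to a $0$-dimensional simplicial complex (a countable discrete set) whose fibers have diameter $\le R$; equivalently, $X$ can be partitioned into countably many clopen sets of diameter $\le R$. My plan is to produce, around every point, a clopen metric ball of diameter at most $R$, and then to disjointify a countable subcover.

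Fix $x\in X$ and apply Lemma \ref{co-area} with center $x$, $r_1$ arbitrarily small and $r_2=R/2$, to the closed set $U=\bar B(x,R/2)$. Since $U\subseteq B(x,R)$, the hypothesis gives $HC_1(U)\le R/100$, so
$$\int_0^{R/2} HC_0\bigl(S(x,r)\bigr)\,dr \;\le\; 2\,HC_1(U) \;\le\; \frac{R}{50}.$$
By definition, $HC_0(A)$ is the infimal number of balls needed to cover $A$, so it is integer-valued and is $\ge 1$ whenever $A\neq\emptyset$. The integrand is therefore $\ge 1$ on the set $\{r : S(x,r)\neq\emptyset\}$, which must have upper Lebesgue measure at most $R/50 < R/2$. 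Hence there exists $r(x)\in(0,R/2)$ with $S(x,r(x))=\emptyset$, and for such $r(x)$ the open ball $B(x,r(x))$ coincides with $\bar B(x,r(x))$, i.e.\ is clopen, and has diameter $\le 2r(x)<R$.

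Having obtained a clopen ball of diameter $<R$ about every point, the second step is standard: since $X$ is proper it is separable, hence Lindel\"of, so the open cover $\{B(x,r(x))\}_{x\in X}$ admits a countable subcover $B_1,B_2,\dots$. Set
$$V_n \;=\; B_n\setminus\bigl(B_1\cup\cdots\cup B_{n-1}\bigr).$$
Each $V_n$ is a difference of clopen sets, hence clopen, has diameter $<R$, and the $V_n$ partition $X$. Mapping $V_n\mapsto n\in\N$ defines a continuous map $\pi\colon X\to\N$ (continuous because every preimage is clopen) whose fibers have diameter $\le R$, proving $UW_0(X)\le R$.

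The only delicate points are (i) arranging the co-area inequality so that the integrand controls $HC_1$ of a ball of radius $R$, and (ii) recognising that $HC_0(A)<1$ forces $A=\emptyset$. Once these are in place the argument is formal; in particular the construction needs nothing beyond properness, so the same argument works for proper (not just compact) $X$.
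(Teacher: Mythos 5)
Your proof is correct, and it takes a genuinely different route from the paper. The paper's proof of Lemma \ref{n=1} never invokes the co-area inequality: it covers each annulus $A_k$ by a nearly $HC_1$-minimal family of small balls, shows that a chain of pairwise-intersecting balls cannot have total radius exceeding $10\delta$ (else one could replace the chain by a cheaper cover of a single $R$-ball, contradicting near-minimality), and takes the resulting connected unions as the clopen pieces. You instead run the co-area inequality in its degenerate dimension: since $HC_0(A)\ge 1$ exactly when $A\ne\emptyset$, the bound $\int_0^{R/2}HC_0(S(x,r))\,dr\le 2HC_1(B(x,R))\le R/50<R/2$ forces an empty sphere $S(x,r(x))$, hence a clopen ball around every point, and a Lindel\"of-plus-disjointification step finishes. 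This is shorter, and it makes the base case structurally parallel to the inductive step (an empty sphere is precisely a ``separating set of zero content'' in dimension one), whereas the paper's chaining argument is self-contained but longer. Two small points to tidy: (i) as literally stated, Lemma \ref{co-area} asks for $U\subset B(r_2)\setminus B(r_1)$, and $\bar B(x,R/2)$ is not contained in $B(x,R/2)$; either take $U=\bar B(x,R/4)$ and integrate over $[0,R/2]$, or note that the proof of Lemma \ref{co-area} (covering $U$ by near-optimal balls and bounding each ball's contribution by $2r_i\cdot r_i^{n-1}$) does not actually use that containment; (ii) your reading of $HC_0$ as the minimal \emph{finite} number of covering balls (with $HC_0(\emptyset)=0$) is the intended one and is what the paper itself uses inside the proof of Lemma \ref{co-area}.
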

\begin{proof} We set $\delta=\dfrac{1}{100} R$. We fix $x_0\in X$ and we consider the closed annuli $A_k=\{x\in X: 10(k-1)R\leq d(x_0,x)\leq 10kR\}$, $k\geq 1, k\in \mathbb N$.
Each $A_k$ is compact so it has a finite covering by balls $B_j(r_j)$ such that $r_j\leq 2\delta $ for all $j$. Let
 $$a_k=HC_1^{2\delta}(A_k).$$ 
We pick for each $A_k$ a covering by open balls $B_j(r_j)$ such that $$\sum r_j-a_k<\delta \  \  \  \  (*) .$$
By doing this for all $k$ we obtain a covering $\mathcal U$ of $X$ by open balls.

%
%
%

Suppose that we have a finite sequence of balls in $\mathcal U$,
$ B_1(r_1),..., B_n(r_n)$ such that $B_i(r_i)$ intersects $ B_{i+1}(r_{i+1})$ for all $i$.
We claim that if this happens then $$\sum _{i=1}^nr_i\leq 10\delta .$$

We may assume by taking a smaller $n$ if necessary and arguing by contradiction that
$$12\delta \geq \sum _{i=1}^nr_i>10\delta .$$

So all these balls are contained in a ball $B(x,R)$ which is contained either in a single annulus $A_k$ or in a union of two annuli $A_k\cup A_{k+1}$. However by our hypothesis the content of $B(x,R)$ is bounded by $\delta $, so we could replace these balls in $\mathcal U$ by finitely many balls $B_s(r_s)$, $s\in S$ such that their union contains $B(x,R)$ and $$\sum _{s\in S}r_s<2\delta .$$

It follows that the sequence $ B_1(r_1),..., B_n(r_n)$
violates $(*)$ for at least one of $A_k,A_{k+1}$.

Let $B\in \mathcal U$.
We note now that if $ B_1(r_1),..., B_n(r_n)$ is a finite sequence of  balls from $\mathcal U$ containing $B$
such that $ B_i(r_i)$ intersects $ B_{i+1}(r_{i+1})$ their union has diameter $<R/2$.

We introduce an equivalence relation on $\mathcal U$. We say that two balls $B,B'$ in $\mathcal U$ are equivalent if there is a finite
sequence of balls $B_1=B,B_2,...,B_n=B'$ such that any two successive balls in the sequence intersect.

We replace then each equivalence class of balls from $\mathcal U$  by their union.

In this way we obtain a cover of $X$ by sets say $D_i$, $i\in \mathbb N$ such that each $D_i$ is open (as a finite union of open balls),  and closed (since its complement is open). It follows that the map
$f:X\to \mathbb N$ where $f(D_k)=k$ is continuous and $$\diam f^{-1}(k)=\diam D_k<R$$
so $UW_{0}(X)\leq R$.

\end{proof}

If $U$ is an open subset of a Riemannian manifold then $\vol _n(U)$ is equal to the $n$-Hausdorff measure of $U$
which is in turn greater or equal to the $n$-dimensional Hausdorff content. It follows that Theorem \ref{volume} is a corollary of
the theorem that we state now-which was conjectured by Guth and proven recently by Liokumovich-Lishak-Nabutovsky-Rotman \cite{LLNR}:

\begin{Thm} \label{content} There is an $\epsilon _n>0$ such that the following holds.
If $X$ is a compact metric space such that for any $x\in X$ the $n$-dimensional Hausdorff content of the ball $B(x,R)$ is bounded by
$\epsilon _n R^n$ then $UW_{n-1}(X)\leq R$.
\end{Thm}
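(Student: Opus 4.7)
The plan is to proceed by induction on $n$. The base case $n=1$ is exactly Lemma~\ref{n=1}. Assume the statement holds in dimension $n-1$ with constant $\epsilon_{n-1}$; I will establish it in dimension $n$ for $\epsilon_n$ sufficiently small in terms of $\epsilon_{n-1}$ and dimension.

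The technical heart of the proof is to construct, for any $X$ satisfying the hypothesis with constant $\epsilon_n$, a closed ``thickened grid'' $G\subset X$ with two properties: (a) every connected component of $X\setminus G$ has diameter at most $R/2$; and (b) $G$ itself, with its induced metric, satisfies the hypothesis of the theorem in dimension $n-1$, i.e.\ every $R$-ball of $X$ intersected with $G$ has $(n-1)$-Hausdorff content at most $\epsilon_{n-1}R^{n-1}$. Concretely, choose a maximal $R/8$-separated set $\{x_i\}\subset X$ and, using the co-area inequality (Lemma~\ref{co-area}) on the annulus $R/8\leq r\leq R/4$ with $\zeta=R/4$, pick for each $i$ a radius $r_i\in[R/8,R/4]$ with $HC^{\zeta}_{n-1}(S(x_i,r_i))\leq C\epsilon_n R^{n-1}$ for a dimensional constant $C$; set $G:=\bigcup_i S(x_i,r_i)$. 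Any component of $X\setminus G$ meeting $B(x_i,r_i)$ is trapped inside that ball by $S(x_i,r_i)\subset G$, so has diameter at most $2r_i\leq R/2$, giving (a).

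Given $G$ satisfying (a) and (b), the inductive hypothesis yields a continuous $\pi_0:G\to Y_0$ to an $(n-2)$-dimensional complex with all fibers of diameter at most $R$. Extend $\pi_0$ to a map $\pi:X\to Y$ as follows: for each component $V$ of $X\setminus G$, attach to $Y_0$ a new apex $c_V$ together with the cone from $c_V$ on $\pi_0(\bd V)\subset Y_0$. The resulting $Y$ has dimension at most $n-1$. Continuously extend $\pi_0$ over $\overline V$ into this cone, equal to $\pi_0$ on $\bd V$ and tending to $c_V$ in the interior of $V$ (for instance, interpolating linearly in the distance to $\bd V$). The fibers of $\pi$ are then either fibers of $\pi_0$, lying in $G$ (diameter $\leq R$), or subsets of some $\overline V$ (diameter $\leq R/2$), so $UW_{n-1}(X)\leq R$.

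The main obstacle is verifying property (b). Because Hausdorff content is not a measure, the contributions of spheres $S(x_i,r_i)$ whose union with $B(y,R)$ contributes to $G\cap B(y,R)$ cannot simply be added without uniform control on how many can overlap a common region. The $\zeta$-restricted Hausdorff content introduced in the excerpt, combined with a chaining argument in the spirit of Lemma~\ref{n=1}---an excessive pileup of overlapping spheres inside a single $R$-ball would force the $n$-content of that ball to exceed $\epsilon_n R^n$, contradicting the hypothesis---should supply the needed multiplicity control, at the cost of a dimension-dependent multiplicative constant that is absorbed into the recursive definition of $\epsilon_n$.
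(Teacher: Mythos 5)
Your overall architecture (induction on $n$, produce a closed separating set with locally small $(n-1)$-content, map it to an $(n-2)$-complex by induction, cone off the complementary pieces) is the same as the paper's. The coning/extension step is essentially correct, modulo two repairable points: the inductive hypothesis must be applied to $G$ at a much smaller scale than $R$ (the paper uses $R/1000$), since a fiber of the glued map over a non-apex point contains the $G$-fiber \emph{together with} every piece $\overline V$ whose image meets that point, so the diameters add; and one should use finitely many open pieces rather than connected components, which in a general compact metric space need be neither open nor finite in number.

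The genuine gap is in your property (b), and it is exactly the heart of the theorem. Your grid $G=\bigcup_i S(x_i,r_i)$ over a maximal $R/8$-separated net has no a priori bound on how many spheres meet a fixed $R$-ball: the hypothesis $HC_n(B(x,R))\leq\epsilon_nR^n$ gives no packing or doubling control (for instance, a metric star with $N$ edges of length $2R$ has $HC_n=0$ for $n\geq 2$ yet contains $N$ points pairwise $R/8$-separated in a single ball). So the sum of the contents of the spheres meeting $B(y,R)$ is $N\cdot C\epsilon_nR^{n-1}$ with $N$ unbounded, and the mechanism you invoke to control $N$ --- that ``an excessive pileup of overlapping spheres would force the $n$-content of that ball to exceed $\epsilon_nR^n$'' --- is false: many sets of small $(n-1)$-content can accumulate in a region of arbitrarily small $n$-content, and the co-area inequality (Lemma \ref{co-area}) only runs in the other direction, and only for the level sets of a single distance function. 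The chaining argument in Lemma \ref{n=1} works because the cover there is chosen to nearly realize the infimum of $HC_1^{2\delta}(A_k)$, so a pileup contradicts near-minimality; your $G$ carries no such minimality. The paper's fix is to define $G$ abstractly as a $\delta$-minimal $R/4$-separating set (near-infimal $HC^{\zeta}_{n-1}$ among all separating sets) and to prove the local content bound by an exchange argument (Lemma \ref{small-cut}): if $Z\cap B(x,R/1000)$ had large content, co-area would produce a sphere $S(x,r)$ of much smaller content, and swapping $Z\cap B(x,r)$ for $S(x,r)$ would produce a separating set of strictly smaller content, contradicting minimality. Without this minimality-plus-exchange idea your construction of $G$ does not yield (b), and the induction does not close.
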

\begin{proof}
We will prove by induction on $n$ that there is a continuous map $\pi: X\to \Sigma$ where $\Sigma$ is a  finite simplicial complex of dimension $\leq n-1$ such that
$\diam \,\pi^{-1}(y)\leq R$ for any $y\in \Sigma$. The theorem holds for $n=1$ by lemma \ref{n=1}.

\begin{Def} Let $Z\subseteq X$ closed. We say that $Z$ is a $D$-{\it separating subset}
if 
$$X\setminus Z=\bigsqcup _{i\in I} U_i$$ where the $U_i$ are open disjoint sets of diameter $\leq D$
and $I$ is finite. We say that the open sets $U_i$ are the {\it pieces of the decomposition of $X$
by $Z$}.

\end{Def}

We set $\zeta =R/1000$.
Let $b(D)$ be the infimum of $HC^{\zeta}_{n-1}(Z)$ over all $D$-separating sets $Z$.
It is not clear whether there exists a $D$-separating set realizing $b(D)$ however it will be sufficient for us to consider sets with content close enough to $b(D)$: We say that $Z$ is a
$\delta ${\it -minimal $D$-separating set} if $Z$ is $D$-separating and
$$HC^{\zeta}_{n-1}(Z)-b(D)\leq \delta .$$
In what follows our statements will be true for $\delta $ sufficiently small.

%

The theorem follows from the next lemma:

\begin{Lem}  \label{induction}  
There is an $\epsilon _n>0$ such that the following holds.
If $X$ is a compact metric space such that for any $x\in X$ the $n$-dimensional Hausdorff content of the ball $B(x,R)$ is bounded by
$\epsilon _n R^n$ then there is a finite simplicial complex $\Sigma $ of dimension $\leq n-1$ and a continuous map $f:X\to \Sigma $  such that:
%
$\diam \, f^{-1}(e)\leq R$ for any simplex $e\in \Sigma $.

\end{Lem}
\begin{proof}  We prove this by induction on $n$. For $n=1$ the statement follows by lemma \ref{n=1}. In particular we may take $\epsilon _1=1/100$.

We will show that the lemma holds for $\epsilon _n$ where we define $\epsilon _n$ inductively by $\epsilon _n= \epsilon _{n-1}/1000^{n+1}$.

We assume now that the lemma holds for $n-1$ for some $n\geq 2$.


\begin{Lem}  \label{small-cut} 
Let $\epsilon _{n-1}$ be the constant provided by lemma \ref{induction} and let $\epsilon _n= \epsilon _{n-1}/1000^{n+1}$.
Let $X$ be a compact metric space  such that for any $x\in X$ the $n$-dimensional Hausdorff content of the ball $B(x,R)$ is bounded by
$\epsilon _n R^n$ .
Let $Z$ be a $\delta $-minimal $R/4$-separating subset of $X$.
Then for any ball of radius $R/1000$,  $B(x,R/1000)$,$$HC^{\zeta}_{n-1}(Z\cap B(x,R/1000))\leq \epsilon _{n-1}\big(\frac{R}{1000}\big)^{n-1}.$$ 

\end{Lem}
\begin{proof}  

We argue by contradiction assuming that $Z$ does not satisfy this inequality for some $x$. We take 

We note that $(R/1000)^n\geq \epsilon _nR^n$. It follows that $HC_{n}(B(x,R))=HC^{\zeta}_{n}(B(x,R))$.
By the co-area inequality (lemma \ref{co-area}) and our hypothesis that 
$HC^{\zeta}_{n}(B(x,R))\leq \epsilon _n R^n$ we have that for some $r\in [R/100,R/50]$ 
$$HC^{\zeta}_{n-1} S(x,r)\leq 200\epsilon _n R^{n-1}\leq \frac {\epsilon _{n-1}R^{n-1}}{5\cdot 1000^{n}} .$$
  
If $Z_1=S(x,r)$ and $Z_2=B(x,r)\cap Z$ we set $Z'=(Z\setminus Z_2)\cup Z_1$. 
We claim that $Z'$ is $R/4$-separating. Indeed let
$$X\setminus Z=\bigsqcup _{i\in I} U_i$$ where $I$ is finite and the $U_i$ are open disjoint sets of diameter $\leq R/4$. Let $U=B(x,r)$. Then
$$X\setminus Z'=\bigsqcup _{i\in I} (U_i\setminus \bar B(x,r))\sqcup U.$$

If $B_i(r_i), i\in I$ is a cover of $Z$ by balls of radius $\leq \zeta$ so that $$\sum _{i\in I}r_i^{n-1} -HC^{\zeta}_{n-1}(Z)<\delta $$
we get a cover of $Z'$ by omitting all balls from this cover intersecting $B(x,R/1000)$ and adding appropriately balls that cover $S(x,r)$ and
approximate $HC^{\zeta}_{n-1} S(x,r)$ up to $\delta$.

We have then
$$HC^{\zeta}_{n-1}(Z')\leq HC^{\zeta}_{n-1}(Z)-\epsilon _{n-1}\big(\frac{R}{1000}\big)^{n-1}+\frac {\epsilon _{n-1}R^{n-1}}{5\cdot 1000^{n}}+\delta$$ 
contradicting the $\delta $-minimality property of $Z$ if we take $$\delta < \frac {\epsilon _{n-1}R^{n-1}}{1000^{n}}.$$

\end{proof}
We prove now lemma \ref{induction}. Let $Z$ be a $\delta $-minimal $R/4$-separating subset of $X$. By lemma \ref{small-cut} and our inductive hypothesis there is a continuous map $\pi _1:Z\to \Sigma_1$ 
where $\Sigma _1$ is a finite  simplicial complex of dimension $\leq n-2$ such that
$\diam\, \pi _1 ^{-1} (e)\leq R/1000$ for any simplex $e\in \Sigma_1$.

Let $U$ be a piece of the decomposition of $X$ by $Z$. Clearly $\bd U\subset Z$ so $\pi _1(\bd U)$ is contained in a finite subcomplex of $\Sigma_1$.
We denote by $\Sigma_{U}$ the minimal such subcomplex of $\Sigma_1$. 

We define a new simplicial complex $\Sigma$ as follows: For each closure of a connected component $U$ we consider the cone $C_{U}$ over $\Sigma_U$
(which is a simplicial complex of dimension $\leq n-1$). We glue $C_U$ to $\Sigma _1$ along
their common subcomplex $\Sigma _U$.

We will need some facts from topology that we recall now (see eg \cite{Hu}). Any finite simplicial complex is an Absolute Neighborhood Retract (ANR). A contractible
ANR is an Absolute Retract (AR). In particular the cone of a finite simplicial complex is an AR. A space $A$ is an AR if and only if it is an
absolute extensor i.e. if it has the following property: if $B$ is any metric space, $K\subseteq B$ is closed and $f:K\to A$ is continuous then $f$ can be extended continuously
to the whole of $B$.

By the above facts it follows that for each $U$ the map $\pi _1:\bd U\to \Sigma _U\subset C_U$
can be extended to a continuous map $\pi :U \to C_U\subset \Sigma $.
Since $X$ is the union of $Z$ with the pieces of the decomposition of $X$ by $Z$ and since 
the map $\pi $ is continuous on the closure of each piece we have that the map $\pi: X\to \Sigma $ is continuous.

Let $e$ be a maximal simplex of $\Sigma $. Then $e$ is either a simplex of $\Sigma _1$ or a cone of
a simplex $e'$ of $\Sigma _1$. If $\pi (U)$ intersects $e$ then in the first case $\bd U$ intersects
$\pi _1^{-1}(e)$ while in the second case $\bd U$ intersects
$\pi _1^{-1}(e')$ . Since $$\diam \,\pi _1^{-1}(e')\leq R/1000\text{ and }\diam (U)\leq R/4$$
we have that $$\diam\, \pi ^{-1}(e) \leq R.$$

We note that an extension argument similar to the one given above appears in section 6.1 of \cite{LLNR}.


\end{proof}

Clearly the theorem follows from the lemma as any point of $\Sigma $ is contained in some simplex $e$ of $\Sigma $.

\end{proof}
\begin{Rem}\label{measure} In the manifold case one could use Hausdorff measure instead of Hausdorff content to
prove theorem \ref{volume}. This would simplify a bit the proof, in particular the proof of Lemma \ref{small-cut}.
We note however that the subset $Z$ that `cuts' the space in small pieces that we introduce is not a manifold. So for the
proof to work one needs a version of the co-area inequality that applies to spaces with finite Hausdorff measure.
We observe that such an inequality follows from Lemma \ref{co-area} by taking a limit as the diameter of the balls approaches $0$ (see also \cite{HS} 1.11, p.15 or \cite{Fe} 3.2.11).
\end{Rem}

\section {The general case and a refinement}

We recall that a metric space is called \textit{proper} if any closed ball is compact.
Theorem \ref{content} holds more generally for proper metric spaces rather than compact ones.
We state here the corresponding inductive statement and explain the modifications needed to prove this.

\begin{Thm}  \label{induction-gen}  
There is an $\epsilon _n>0$ such that the following holds.
If $X$ is a proper metric space such that for any $x\in X$ the $n$-dimensional Hausdorff content of the ball $B(x,R)$ is bounded by
$\epsilon _n R^n$ then there is a locally finite simplicial complex $\Sigma $ of dimension $\leq n-1$ and a continuous map $f:X\to \Sigma $  such that:
%
$\diam \, f^{-1}(e)\leq R$ for any simplex $e\in \Sigma $.
In particular $UW_{n-1}(X)\leq R$.

\end{Thm}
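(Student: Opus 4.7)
The plan is to run the induction of Lemma \ref{induction} in the proper setting, modifying only those steps that used compactness. The base case $n=1$ is Lemma \ref{n=1}, which is already stated for proper spaces, so I may assume $n\geq 2$ and that Theorem \ref{induction-gen} holds in dimension $n-1$. I keep the definition of $D$-separating subset and the notion of pieces of the decomposition, but allow the index set $I$ in $X\setminus Z=\bigsqcup_{i\in I}U_i$ to be countable. Because the pieces have diameter $\leq R/4$ and $X$ is proper, any closed ball meets only finitely many of them, so the family $\{U_i\}$ is automatically locally finite.

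The one substantive change is that global $\delta$-minimality is no longer meaningful, since the $(n-1)$-content of a separating set of a non-compact space is typically infinite. I replace it by a local version: $Z$ is locally $\delta$-minimal if
$$HC^{\zeta}_{n-1}(Z\cap B(x,R/500))\leq HC^{\zeta}_{n-1}(Z'\cap B(x,R/500))+\delta$$
for every $x\in X$ and every $R/4$-separating set $Z'$ agreeing with $Z$ outside $B(x,R/500)$. Such a $Z$ is produced by a compact exhaustion $K_j=\overline{B(x_0,jR)}$: at stage $j$ I minimize $HC^{\zeta}_{n-1}$ of the separating set inside $K_j$, among sets that agree with the stage-$(j-1)$ choice outside $K_j$, down to tolerance $\delta 2^{-j}$. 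Passing to the limit via Blaschke selection on each $K_{j'}$ and using lower semicontinuity of $HC^{\zeta}_{n-1}$ under Hausdorff convergence of closed sets yields a $Z$ with the required local minimality.

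With $Z$ in hand, the rest of the proof carries over verbatim. Lemma \ref{small-cut} was proved locally inside a ball of radius $R$ and only uses minimality against modifications confined to that ball, so it applies to give $HC^{\zeta}_{n-1}(Z\cap B(x,R/1000))\leq\epsilon_{n-1}(R/1000)^{n-1}$ for every $x\in X$. The closed subset $Z\subset X$ is itself proper, so the inductive hypothesis produces a locally finite simplicial complex $\Sigma_1$ of dimension $\leq n-2$ together with a continuous map $\pi_1\colon Z\to\Sigma_1$ whose fibres have diameter $\leq R/1000$. For each piece $U$ the boundary $\partial U\subset Z$ is bounded and therefore compact by properness of $X$, so $\pi_1(\partial U)$ is contained in a finite subcomplex $\Sigma_U\subset\Sigma_1$. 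I attach the cone $C_U$ over $\Sigma_U$ to $\Sigma_1$ and extend $\pi_1|_{\partial U}$ to $U\to C_U$ via the AR property, exactly as in the compact case. The resulting $\Sigma$ is locally finite because $\Sigma_1$ is locally finite and $\{U_i\}$ is locally finite, and the diameter bound $\diam f^{-1}(e)\leq R$ follows just as before.

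The main obstacle is producing the locally $\delta$-minimal separating set $Z$. Without a finite global infimum, one must exhaust $X$ compactly, and several points need checking: that the minimization at each stage is well-posed (a minimizing sequence of separating sets on $K_j$ with prescribed behaviour outside $K_j$ has a convergent subsequence), that the Hausdorff limit of $R/4$-separating sets remains $R/4$-separating, i.e.\ that no piece swells past diameter $R/4$ under Hausdorff convergence, and that $HC^{\zeta}_{n-1}$ is genuinely lower semicontinuous under the relevant form of convergence. The summable geometric tolerance $\delta 2^{-j}$ and the fact that stage-$j$ modifications are confined to $K_j$ are what allow these estimates to close up.
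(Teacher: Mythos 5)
Your reduction of the problem to producing a \emph{locally} $\delta$-minimal separating set is a reasonable idea, but the construction you give for that set has a genuine gap, and it is precisely the step you yourself flag as ``needing checking''. The limit argument requires that $HC^{\zeta}_{n-1}$ be lower semicontinuous under Hausdorff convergence of closed sets, i.e.\ $HC^{\zeta}_{n-1}(Z)\leq \liminf_j HC^{\zeta}_{n-1}(Z_j)$ when $Z_j\to Z$. This is false: Hausdorff content is only \emph{upper} semicontinuous in this sense (for compact sets one covers the limit by finitely many slightly enlarged balls from a near-optimal cover). In the other direction the content can jump up in the limit -- a sequence of finite sets, each of zero $1$-content, can Hausdorff-converge to a segment of positive $1$-content, and more generally a union of many tiny blobs of small total $(n-1)$-content can converge to a set of large content. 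So the Blaschke limit $Z$ of your stage-$j$ near-minimizers need not inherit any approximate minimality, and without that Lemma \ref{small-cut} cannot be run on it. (There is also a parameter slip: the surgery in Lemma \ref{small-cut} replaces $Z$ inside a ball $B(x,r)$ with $r$ up to $R/50$, so a competitor produced by that surgery does not agree with $Z$ outside $B(x,R/500)$ and is not admissible for your notion of local minimality; this is fixable by enlarging the radius, but the semicontinuity problem is not.)

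Note that the paper is careful never to take limits of separating sets, exactly because minimizers (and limits of near-minimizers) are not known to exist or behave well: it works throughout with $\delta$-minimal sets, which exist trivially since one is taking an infimum of a nonempty set of reals. In the proper case the paper covers $X$ by two staggered families of compact annuli $A_m$, $B_m$ of width $10R$ centered at $x_0$, takes a $\delta$-minimal $R/4$-separating set in each annulus separately, and uses the union $Z\cup T$ of all of them, with pieces $U_i\cap V_j$. The constant $\epsilon_n$ is shrunk by a factor of $10$ so that the local content bound of Lemma \ref{small-cut2} survives the fact that a small ball may meet the separating sets of several overlapping annuli, and the staggering guarantees that every small ball lies well inside at least one compact annulus where the cut-and-paste minimality argument is valid. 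If you want to salvage your route, you would either have to prove a semicontinuity statement tailored to separating sets (not available), or abandon the limit and localize the minimization to compact pieces from the start -- which is what the paper does.
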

\begin{proof}  The proof is as before by induction on $n$. For $n=1$ the statement follows by lemma \ref{n=1}.
We generalize slightly the definition of  $D$-{\it separating subset}:
\begin{Def} Let $Z\subseteq X$ closed. We say that $Z$ is a $D$-{\it separating subset}
if 
$$X\setminus Z=\bigsqcup _{i\in I} U_i$$ where the $U_i$ are open disjoint sets of diameter $\leq D$
and any ball $B(x,r)$ intersects finitely many of the $U_i$'s.  We say that the open sets $U_i$ are the {\it pieces of the decomposition of $X$
by $Z$}.

\end{Def}

To do the inductive step we fix $x_0\in X$ and let $$A_n=\{x\in X: 10(n-1)R\leq d(x,x_0)\leq 10nR\},$$ $$ B_n=\{x\in X: 10(n-1)R+5R\leq d(x,x_0)\leq 10nR+5R\}$$
($n\geq 1$). We set $\zeta=R/1000$. Each $A_n, B_n$ is compact so we may apply lemma \ref{content}. We modify slightly
lemma \ref{small-cut}: we pick a smaller $\epsilon _n$, say, $$\epsilon _n\leq \frac {\epsilon _{n-1}}{10\cdot 1000^{n+1}}$$
and we obtain the following slightly stronger conclusion by the same proof:
\begin{Lem}  \label{small-cut2} Let  $Z_n$ be a $\delta $-minimal $R/4$-separating set of $A_n$.
Then for any ball of radius $R/1000$,  $B(x,R/1000)$ of $A_n$,$$HC^{\zeta}_{n-1}(Z_n\cap B(x,R/1000))\leq \frac{1}{10}\epsilon _{n-1}\big(\frac{R}{1000}\big)^{n-1}.$$ 
\end{Lem}
The same lemma applies of course for $\delta $-minimal $R/4$-separating sets of $B_n$ which we denote by $T_n$.

Then if $$Z=\bigcup _{n=1}^{\infty}Z_n, \ \ \ T=\bigcup _{n=1}^{\infty}T_n $$ we claim that $Z\cup T$ is an $R/4$-separating set of $X$. Indeed if $$U_n^i, \  \, V_n^j, \, i\in I_n, j\in J_n$$ are the pieces of the decomposition of
$A_n$ by $Z_n$, respectively $B_n$ by $T_n$ we set 
$$I=\cup_{n=1}^{\infty } I_n,\, J=\cup_{n=1}^{\infty } J_n$$ and
$$I'=\{i\in I: U_i\text{ is open in X} \}, \, J'=\{j\in J: V_j\text{ is open in X} \}.$$

Then we may take the open sets $$W_{ij}=U_i\cap V_j, \, i\in I',\, j\in J'$$
to be the pieces of the decomposition of $X$ by $Z\cup T$. Clearly $\bigcup W_{ij}=X\setminus (Z\cup T)$. 
Each ball intersects finitely many of these sets since it intersects finitely many of $U_i,V_j$. Applying lemma \ref{small-cut2} we see that $Z\cup T$ satisfies the hypothesis of
theorem \ref{induction-gen} for $n-1$ and the same proof as in theorem \ref{induction} applies in this case too.

\end{proof}

Our method allows us to give a weaker sufficient condition for a space to have small Uryson width. We remark that the volume condition given in Theorem \ref{content} for a space to have small Uryson width is not a necessary one. For example one can have a ball $B$ of radius $R$ with $HC_n(B)=R^n$ and $UW_1(B)<\epsilon $
for arbitrarily small $\epsilon $.  Recall that a metric space $X$ has inductive dimension $\leq n$ if every point $x\in X$ has arbitrarily small neighborhoods $U$ such that $\dim\,\bd U\leq n-1$ (see \cite{HW}, def. III.1, p.24). It turns out that for separable metric spaces the inductive dimension is equal to the covering (known also as topological) dimension (see \cite{HW}, thm V 8, p.67).

We give a quantitative statement similar to the definition of inductive dimension where we assume for $\bd U$ small $n-1$-Hausdorff content rather than dimension. So this is a result in the spirit of passing from a qualititative theorem
from topological dimension theory into some quantitative estimates (for more on this theme see \cite{Gu2}, sec. 0.2).

\begin{Thm}\label{inductive} There exist $\epsilon _n>0$ so that the following holds. Suppose $X$ is a proper metric space
and there exists
a radius $R$ such that every ball $B(x,R)$ in $X$ is contained in an open set $U$ with the properties:

1. $U\subseteq B(x,10R)$.

2. $HC_{n-1}(\bd U)\leq \epsilon _nR^{n-1}$.

Then $UW_{n-1}(X)\leq R$.
\end{Thm}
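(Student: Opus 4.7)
The plan is to adapt the $\delta$-minimal separating set machinery of Lemma~\ref{induction} and Lemma~\ref{small-cut}, with the hypothesis-supplied open sets $U$ playing the role that small-content spheres (produced by the co-area inequality) played before. Concretely, I construct an $R/4$-separating subset $Z\subset X$ whose local $(n-1)$-Hausdorff content is small enough to apply Theorem~\ref{induction-gen} in dimension $n-1$ to $Z$, and then extend the resulting map from $Z$ over the pieces by coning, exactly as in the last paragraphs of Lemma~\ref{induction}.

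Rescale by applying the hypothesis at scale $R' = R/80$: every ball $B(x, R')$ lies in an open $U_x \subseteq B(x, R/8)$ of diameter $\leq R/4$, with $HC_{n-1}(\partial U_x) \leq \epsilon_n (R')^{n-1}$. Fix $\zeta = R/1000$ and choose $\epsilon_n$ small enough that $\epsilon_n (R')^{n-1} < \zeta^{n-1}$; then any covering ball of radius $> \zeta$ already overshoots the bound on $\partial U_x$, so $HC^{\zeta}_{n-1}(\partial U_x) = HC_{n-1}(\partial U_x)$, just as in the opening reduction of Lemma~\ref{small-cut}. In the compact case, pick a finite cover $\{B(x_i, R')\}_{i=1}^{k}$ of $X$ and set $Z_0 = \bigcup_i \partial U_{x_i}$. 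Since $X \setminus \partial U_{x_i} = U_{x_i} \sqcup (X \setminus \overline{U_{x_i}})$ is a disjoint union of open sets, any connected subset of $X \setminus Z_0$ meeting $U_{x_i}$ is trapped in $U_{x_i}$; combined with $\bigcup_i U_{x_i} \supseteq X$, each component of $X \setminus Z_0$ lies inside some $U_{x_i}$ and has diameter $\leq R/4$. So $Z_0$ is $R/4$-separating with finite $HC^{\zeta}_{n-1}$, and $\delta$-minimal $R/4$-separating sets $Z$ exist.

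The central step replaces Lemma~\ref{small-cut}: if $HC^{\zeta}_{n-1}(Z \cap B(y, R/1000)) > \epsilon_{n-1}(R/1000)^{n-1}$ for some $y$, set $U = U_y$ and $Z' = (Z \setminus U) \cup \partial U$. A direct check gives $X \setminus Z' = U \sqcup \bigsqcup_i (V_i \setminus \overline{U})$, where $\{V_i\}$ are the old pieces, so $Z'$ is still $R/4$-separating. To estimate its content, start from a $\zeta$-cover of $Z$ with total $(n-1)$-sum within $\delta$ of $HC^{\zeta}_{n-1}(Z)$, delete every ball meeting $B(y, R/1000)$ (each such ball lies in $B(y, R/1000 + 2\zeta) \subseteq B(y, R') \subseteq U$, so the remaining balls still cover $Z \setminus U$), and append a near-optimal $\zeta$-cover of $\partial U$. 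This yields
$$HC^{\zeta}_{n-1}(Z') \leq HC^{\zeta}_{n-1}(Z) - \epsilon_{n-1}(R/1000)^{n-1} + \epsilon_n (R')^{n-1} + O(\delta),$$
which drops below the infimum $b(R/4)$ once $\epsilon_n$ is much smaller than $(80/1000)^{n-1}\epsilon_{n-1}$ and $\delta$ is sufficiently small, contradicting the $\delta$-minimality of $Z$.

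With this bound, $Z$ verifies the hypothesis of Theorem~\ref{induction-gen} in dimension $n-1$ at scale $R/1000$, so that theorem produces a continuous $\pi_1 : Z \to \Sigma_1$ with $\dim \Sigma_1 \leq n-2$ and fibers of diameter $\leq R/1000$. I then extend $\pi_1$ over each piece $V_i$ by coning over $\pi_1(\partial V_i)$ and invoking the absolute retract property of cones, as in the last paragraphs of Lemma~\ref{induction}; the resulting fibers have diameter at most $R/4 + R/1000 + R/4 \leq R$. The non-compact case is handled via the annular decomposition in the proof of Theorem~\ref{induction-gen}, applying the above compact argument on each pair of compact annuli $A_k, B_k$ and gluing via the intersection trick. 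The main obstacle I foresee is the content bookkeeping in the replacement step --- in particular, tuning the scales $R' = R/80$, $\zeta = R/1000$, and the small-ball radius $R/1000$ so that every omitted covering ball fits inside $U_y$ while still leaving enough room to beat $\epsilon_{n-1}(R/1000)^{n-1}$ against $\epsilon_n (R')^{n-1}$.
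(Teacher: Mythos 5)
Your proposal is correct and follows the paper's own (very brief) proof of this theorem: replace the co-area--produced sphere in Lemma \ref{small-cut} by the boundary $\bd U$ supplied by the hypothesis, run the $\delta$-minimality exchange argument to get a separating set with locally small restricted content, and finish by induction and coning as in Theorem \ref{induction-gen}. The one step not literally available to you is the opening ``rescaling'': the hypothesis is stated at the single scale $R$, so it cannot be invoked at $R'=R/80$ to force the sets $U_x$ to have diameter $\le R/4$; working with the sets of diameter $\le 20R$ that the hypothesis actually provides yields $UW_{n-1}(X)\le CR$ for a universal $C$, which is the same constant slack already present in the paper's own sketch (it uses $1000R$-separating sets).
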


\begin{proof} 
One argues as in the proof of theorem \ref{induction-gen}. Here we don't need the co-area formula as we assume that $HC_{n-1}(\bd U)$ is small.
So we can prove as in lemma \ref{small-cut} that a $\delta $-minimal $1000R$ separating subset $Z$ has locally small $HC_{n-1}^{\zeta}$
and the rest of the proof is identical.
\end{proof}

We remark however that this theorem does not provide a characterization of spaces $X$ with small $UW_{n-1}(X)$. For example consider a 3-regular metric tree
$T$ where each edge has length $\delta >0$. Let $X=T\times [0,\epsilon]$. Clearly $UW_1(X)\leq \epsilon $ but by picking $\delta $ very small
we see that $HC_{1}(\bd U)\geq R$ for open sets $U$ containing an $R$-ball. 

\section {An example}

Balacheff-Sabourau showed in \cite{BS} that there is a constant $c_g$ such that if $S_g$ is a Riemannian surface of genus $g$ and volume 1 then there is a map $f:S_g\to \mathbb R$ such that the length of the level sets $f^{-1}(x)$ is bounded by $c_g$. Guth
in \cite {Gu4}, p. 765 asks if there is a similar bound for maps from the 3-sphere to $\mathbb R^2$ where we assume
again that $\vol\,S^3=1$.
In this section we show that for any $C>0$ there is a Riemannian metric $g$ on the 3-sphere with $\vol  (S^3,g)=1$ such that for any map $f:S^3\to \mathbb R^2$ there is some $x\in \mathbb R^2$ for which $\diam (f^{-1}(x))>C$ answering this question in the negative.
We note that Theorem 1.1 implies that there is $C>0$ such that for any Riemannian  metric $g$ $(S^3,g)$ of volume 1 there a map $f:S^3\to \Sigma$
for some 2-complex $\Sigma $ such that for any $x\in \Sigma $  $\diam (f^{-1}(x))\leq C$. So the example really shows that we can not replace $\Sigma $ by $\mathbb R^2$. It remains open whether one can find a map $f:S^3\to \Sigma $ where $\Sigma $ is a 2-complex
so that the length of the level sets (rather than the diameter) is bounded (see \cite{Gu4}).

One could see maps $f:M\to \mathbb R^n$ where $M$ is a manifold as `higher analogs' of Morse
functions-especially if one imposes some smoothness conditions on $f$. Such maps were studied extensively in \cite{Gro-sing1}
and in \cite{Gro-sing2}.

Our construction is based on a simple observation about graphs. Namely the fact that
the non-planarity of the complete graph $K_5$ has a quantitative version: if we take a copy of $K_5$ where the edges are very long
then in any drawing of this graph on the plane two `far away' points get identified. We give a formal proof of this in the next two lemmas.


%
%
%
Let $\Gamma=(V,E)$ be a graph.
We consider here metric graphs, that is edges are isometric to some interval $[0,\ell]$ and the distance between two points is the length of the shortest path joining them.

It will be convenient to identify some planar subsets with graphs: We will say that a subset of the plane $\mathbb R^2$ is a graph if it is a finite union of closed arcs and any two
arcs may intersect only at their endpoints. We say that these arcs are the edges of the graph. We call the subset of the plane consisting of endpoints of these arcs
the set of vertices of the graph. We note that a subset of the plane can have more than one graph structure
as one can always subdivide some edges and add vertices. So when we talk of a graph on the plane we assume that
we have fixed a choice of vertices and edges.

We recall a classical topological lemma (see Corollary 31.6 in \cite{Wil}).

\begin{Lem}\label{simple} Let $X$ be a metric space and let $f:[0,1]\to X$ be a continuous path joining $x,y$. Then there is a simple
path joining $x,y$ contained in $f([0,1])$.
\end{Lem}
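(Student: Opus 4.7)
The plan is to apply Zorn's lemma to the collection of closed path-connected subsets of $f([0,1])$ containing $x$ and $y$, extract a minimal element, and show that this minimal element is itself a simple arc from $x$ to $y$.

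Set $Y = f([0,1])$, which is compact, and let $\mathcal{F}$ consist of closed subsets $K \subseteq Y$ with $x, y \in K$ and $K$ path-connected. Then $Y \in \mathcal{F}$, so $\mathcal{F}$ is nonempty; order it by inclusion. To apply Zorn I need every descending chain $\{K_\alpha\}$ to admit a lower bound. The nested intersection of compact sets gives a nonempty compact $K = \bigcap K_\alpha$ containing $x, y$. For path-connectedness of $K$, I would pick paths $\gamma_\alpha:[0,1] \to K_\alpha$ joining $x$ to $y$, reparametrize them to enjoy a uniform modulus of continuity (exploiting uniform continuity on the compact ambient space $Y$), and apply an Arzel\`a--Ascoli argument to extract a uniform subsequential limit $\gamma_\infty:[0,1] \to Y$. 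Since $\gamma_\infty(t) \in K_\beta$ for every index $\beta$, we have $\gamma_\infty([0,1]) \subseteq K$, so $K \in \mathcal{F}$. Zorn's lemma then yields a minimal element $K_0$.

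I then claim $K_0$ is a simple arc. Being a continuous image of $[0,1]$ via the limit path produced above, $K_0$ is a Peano continuum (compact, connected, locally connected, metric). Minimality forbids redundant structure: if $K_0$ contained a point $p$ whose removal together with a small relative neighborhood still left a path-connected set between $x$ and $y$, minimality would be violated. A classical cut-point analysis in Peano continua then shows that a minimal path-connected continuum between two prescribed points is homeomorphic to $[0,1]$ with $x, y$ as endpoints, furnishing the desired simple path inside $f([0,1])$.

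The main obstacle I expect is the Arzel\`a--Ascoli step in the chain condition: the $\gamma_\alpha$ are not a priori equicontinuous, so a reparametrization argument is needed to produce a uniform modulus of continuity across the chain. An alternative route, sidestepping both this and the cut-point analysis, is a transfinite recursion directly on $[0,1]$: iteratively replace each subinterval $[s,t]$ with $f(s) = f(t)$ by a constant segment, which cuts loops out of $f$ without enlarging the image; a maximal such reduction (existence of which follows from Zorn applied to the poset of compatible reductions) produces a simple path whose image is contained in $f([0,1])$.
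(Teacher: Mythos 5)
The paper does not actually prove this lemma: it is invoked as a classical fact with a pointer to Willard, Corollary 31.6. So your argument has to stand on its own, and your primary route does not. The Zorn step fails at the chain condition, which is not merely hard to verify but false: the intersection of a decreasing chain of compact path-connected sets containing $x$ and $y$ need not be path-connected. For instance, the closed topologist's sine curve $S$ is the intersection of its closed $1/n$-neighbourhoods in $\mathbb R^2$; each of these is compact and path-connected and contains both $(0,0)$ and $(1,\sin 1)$, yet $S$ is not path-connected between those points. Consequently no Arzel\`a--Ascoli patch can succeed: the paths $\gamma_\alpha$ admit no common modulus of continuity, reparametrisation cannot manufacture one (the paths need not be rectifiable, and lengths along a chain can be unbounded), and the counterexample shows the desired conclusion itself is false. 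Moreover, even granting a minimal element $K_0$, your closing step --- that a minimal closed path-connected set containing $x,y$ must be an arc --- is essentially the theorem being proved: the natural way to see it is to produce an arc from $x$ to $y$ inside the Peano continuum $K_0$ and invoke minimality, which is circular, and the ``classical cut-point analysis'' you appeal to is not supplied.

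Your alternative sketch is the germ of the standard proof, but the sentence ``a maximal such reduction produces a simple path'' is exactly the point that needs proof. The clean version works on the parameter interval: let $\mathcal K$ be the family of closed sets $K\subseteq[0,1]$ containing $0$ and $1$ such that $f(a)=f(b)$ for every component $(a,b)$ of $[0,1]\setminus K$. Chains in $\mathcal K$ (ordered by reverse inclusion) have their intersections in $\mathcal K$ --- this needs a small compactness-and-continuity argument --- so Zorn yields a minimal $K$. Minimality then forces: if $s<t$ lie in $K$ and $f(s)=f(t)$, then $(s,t)\cap K=\emptyset$, since otherwise $K\setminus(s,t)$ would be a strictly smaller member of $\mathcal K$. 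Collapsing each complementary interval of $K$ to a point gives a quotient of $[0,1]$ homeomorphic to $[0,1]$ (when $x\neq y$; the case $x=y$ is trivial), on which $f$ induces a continuous injection into $f([0,1])$, hence an embedding by compactness of the source and the Hausdorff property of metric spaces. Writing this out would give a complete proof; as submitted, the main route contains a false intermediate claim and the alternative route leaves its decisive step unproved.
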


%

\begin{Lem}\label{planar} Let $K_5=(V,E)$ be the complete graph with 5 vertices metrized so that each edge has length $10R$.
If
$f:K_5 \to \mathbb R^2$ is continuous then there is some $x\in \mathbb R^2$ with $\diam f^{-1}(x)>R$.

\end{Lem}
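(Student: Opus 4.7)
The plan is to argue by contradiction: I assume every fiber $f^{-1}(x)$ has diameter at most $R$ and derive a contradiction with the non-planarity of $K_5$.

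First I would note that any two vertices of $K_5$ lie at distance exactly $10R$, so the fiber bound forces $f(v_1),\dots,f(v_5)$ to be five distinct points, and no arc $f(e_{ij})$ passes through a third vertex image $f(v_k)$ (a preimage would lie at $K_5$-distance $\ge 10R$ from $v_k$). For each edge $e_{ij}$, composing $f$ with a parametrization of $e_{ij}$ gives a continuous path in $\mathbb R^2$ from $f(v_i)$ to $f(v_j)$, so Lemma \ref{simple} produces a simple arc $\alpha_{ij}\subseteq f(e_{ij})$ between them. The key geometric step is the observation that for any two \emph{non-adjacent} edges $e_{ij}$ and $e_{kl}$ (with $\{i,j\}\cap\{k,l\}=\emptyset$), any path in $K_5$ joining a point of $e_{ij}$ to a point of $e_{kl}$ must traverse a complete edge of length $10R$ to pass between the two endpoint pairs; hence such points are at $K_5$-distance $\ge 10R>R$, incompatible with lying in a common fiber. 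Thus $f(e_{ij})\cap f(e_{kl})=\emptyset$, and in particular $\alpha_{ij}\cap \alpha_{kl}=\emptyset$ whenever the two edges are non-adjacent.

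Finally I would use this to force planarity of $K_5$. After an arbitrarily small perturbation, which by compactness and positive separation of non-adjacent arcs preserves their disjointness, the ten arcs can be put into general position with only finitely many transverse double-point intersections. The resulting good drawing of $K_5$ in $\mathbb R^2$ has every pair of independent edges crossing an even number of times (namely zero), so the Hanani--Tutte theorem implies that $K_5$ is planar, contradicting Kuratowski's theorem. This contradiction shows that some fiber must have diameter strictly greater than $R$.

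The hard part is this last step. The fiber bound only prevents \emph{non}-adjacent arcs from meeting; two adjacent arcs sharing a vertex may still cross each other at points other than their common endpoint, and our geometric analysis only forces the preimages of such extra intersections to lie within $R$ of the shared vertex. Moreover two simple arcs from a common basepoint can cross \emph{essentially} in a way that cannot be undone within their union (consider two arcs whose union forms a ``handcuff'' with a single genuine crossing), so one cannot simply remove these crossings by local surgery near each vertex. Invoking Hanani--Tutte --- or an equivalent parity/linking-number argument specific to the five chords of a Hamilton cycle in $K_5$ --- is the clean way around this obstacle.
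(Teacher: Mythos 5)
Your argument is correct, but it takes a genuinely different route from the paper at exactly the point you flag as the hard part. The paper never lets adjacent edge-images interact: for each vertex $v_i$ it takes the star $B(v_i,2R)\subset K_5$, extracts from its image a planar tree $T_i$ (via Lemma \ref{simple} and successive truncations at last intersection points), and joins the trees only by arcs coming from the middle portions of the edges. Any two of these middle portions --- even on adjacent edges --- are at $K_5$-distance at least $4R$, since a path between them must pass through the common vertex; so under the fiber bound their images are genuinely disjoint, and the five pairwise disjoint trees together with the pairwise disjoint connecting arcs form an honest planar graph with a $K_5$ minor, contradicting Kuratowski. In other words, the ``local surgery near the shared vertex'' that you correctly observe is unavailable for two arbitrary simple arcs \emph{is} available here, because the metric of $K_5$ lets one absorb everything within $2R$ of a vertex into a single connected blob and contract it. Your route instead keeps the full arcs $\alpha_{ij}$, accepts that adjacent arcs may cross, and after a general-position perturbation invokes the strong Hanani--Tutte theorem (equivalently, the mod $2$ van Kampen parity for $K_5$: in any generic drawing the number of crossing pairs of independent edges is odd). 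Your preliminary reductions --- distinct vertex images, no arc through a third vertex image, disjointness of the images of independent edges --- are all correctly justified by the metric, so the argument is complete; what it buys is a shorter combinatorial endgame, at the cost of importing a much less elementary external theorem, whereas the paper's blob-and-contract construction stays entirely within the non-planarity of $K_5$ as a minor.
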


\begin{proof} 
We argue by contradiction, that is we assume that $\diam f^{-1}(x)\leq R$ for all $x$.
For any vertex $v\in V$ we consider the ball of radius $2R$, $B(v,2R)$ in $K_5$.
This is a tree $T$ consisting of a union of intervals $[v,x_i],\,i=1,2,3,4$ intersecting only at $v$. 
By lemma \ref{simple} there is a simple arc $\alpha _i$ with endpoint $f(v),f(x_i)$ contained in $f([v,x_i])$.
Let $p_1$ be the last intersection point on $\alpha _2$ between $\alpha _1,\alpha _2$. We replace then $\alpha _2$ by the subarc
 of $\alpha _2$ with endpoints $p_1,f(x_2)$. Call this arc $\beta _2$
Similarly we consider the last intersection point $p_2$ of $\alpha _3$ with $\alpha _1\cup \beta _2$ and we replace $\alpha _3$
by its subarc of $\alpha _3$ with endpoints $p_2,f(v_3)$. We repeat this with the other arcs and
we note that $\alpha _1\cup \beta _2\cup ...\cup \beta _4$ is a tree
with 4 (or 5) endpoints contained in $f(T)$. We do this for all 5 vertices of $K_5$ and we obtain 5 trees on the plane
$T_i,i=1,...,5$. By lemma \ref{simple} for each pair $T_i,T_j$ there is a simple arc $\alpha_{ij}$ joining an endpoint of $T_i$
with an endpoint of $T_j$. We note that $\alpha _{ij}$ might intersect $T_i,T_j$ at many points. We replace $\alpha _{ij}$
by a subarc $\beta _{ij}$ such that $\beta _{ij}$ intersects $T_i\cup T_j$ only at its endpoints and one endpoint is in $T_i$
and another is in $T_j$. By our hypothesis the arcs $\beta_{ij}$ do not intersect each other and the trees $T_i$ do not intersect
either. So the union of the $T_i$'s with the $\beta_{ij}$'s is a planar graph. However it is clear that this graph
has $K_5$ as a minor-just contract the trees $T_i$ to points. This is clearly a contradiction since $K_5$ is not planar.

\end{proof}

\textbf{The construction.}  The idea is that we may construct a 3-sphere with volume 1 that 
contains an almost isometric copy of $K_5$ where the edges of $K_5$ are very long. The point is we can replace
$K_5$ with a very thin handlebody $\Sigma $ and $S^3$ can be obtained by gluing two such thin handlebodies. To arrange that
$K_5$ is almost isometrically embedded we interpolate $\Sigma \times [0,L]$ for some big $L$ between the two copies.
We explain now this more formally.

Given $C>0$ pick a graph $\Gamma $ isomorphic to $K_5$ so that the edges of $\Gamma $ have length
$100C$. Thicken $\Gamma$ so that is becomes a handlebody $\Sigma $. We may choose a Riemannian metric on $\Sigma $ so that
$\Sigma $ is contained in an $\epsilon $ neighborhood of $\Gamma $, its volume is less than $\epsilon $, its boundary
surface $S$ and $\Sigma $ are both $(1,\epsilon )$ quasi-isometric to $\Gamma $ and $\area (S)<\epsilon $, where $\epsilon $ is a small positive number that we will specify later.
Glue to $\Sigma $ along $S$ the manifold $S\times [0,100C]$ with the product metric to obtain a handlebody $M$. Clearly
 $\vol (M)<(100C+1)\epsilon $.
Consider now a small copy $S_1$ of $S$ of diameter $<1/10$ embedded in the standard sphere $S^3$ of volume $1-\delta $ where $\delta $
is a small positive number to specify later. Denote the handlebody of diameter $<1/10$ bounded by $S_1$ in $S^3$ by $\Sigma _1$.
Consider now a Riemannian metric $g_1$ on $S\times [0,1]$ interpolating between the metric on $\bd M$ and $S_1$, so
$S\times \{0\}$ is isometric to $S$ and $S\times \{1\}$ is isometric to $S_1$. We may further assume that the volume of
$(S\times [0,1],g_1)$ is smaller than $2\epsilon $. We glue now $S\times \{0\}$ to $M$ and $\bd M$ and $S\times \{1\}$ to $S^3\setminus S_1$ by isometries. By appropriately picking $\delta, \epsilon $ and smoothing the metric we obtain a Riemannian sphere
$(S^3,g)$ with $\vol (S^3,g)=1$. Since $\Gamma $ is $(1,\epsilon)$ quasi-isometrically embedded in this sphere clearly
lemma \ref{planar} implies that for any $f:S^3\to \mathbb R^2$ there are $x,y\in \Gamma $ with $d(x,y)>C$ and $f(x)=f(y)$.

We note that this construction is very similar to a construction in proposition 0.10 in \cite{Gu2} which is turn is similar
to example ($H_1''$) of \cite{Gro-width}.

Larry Guth after seeing this example suggested another way to obtain such metrics: start with a tripod $T$ with long edges. Clearly any
map from the tripod to $\mathbb R$ has some fiber of long diameter. Consider $X=T\times [0,L]$ for some big $L$. Again any map
$f:X\to \mathbb R^2$ has some big fiber. Now thicken $X$ to obtain a 3-ball and glue two such balls along their boundaries to obtain
a 3-sphere $S$ such that for any $f:S\to \mathbb R^2$ there is a point $x\in \mathbb R^2$ for which $\diam f^{-1}(x)$ is large.

\end{document}
\bye